\title[~~]{Linear Convergence of Distributed Mirror Descent with Integral Feedback for Strongly Convex Problems}
\newcommand{\0}{\mathbb{0}}
\newcommand{\1}{\mathbb{1}}
\newcommand{\R}{\mathbb{R}}
\newcommand{\bb}{\mathbf{b}}
\newcommand{\rb}{\mathbf{r}}
\newcommand{\wb}{\mathbf{w}}
\newcommand{\xb}{\mathbf{x}}
\newcommand{\yb}{\mathbf{y}}
\newcommand{\zb}{\mathbf{z}}
\newcommand{\Ab}{\mathbf{A}}
\newcommand{\Db}{\mathbf{D}}
\newcommand{\Hb}{\mathbf{H}}
\newcommand{\Lb}{\mathbf{L}}
\newcommand{\Mb}{\mathbf{M}}
\newcommand{\Qb}{\mathbf{Q}}
\newcommand{\Rb}{\mathbf{R}}
\newcommand{\Sb}{\mathbf{S}}
\newcommand{\Wb}{\mathbf{W}}
\newcommand{\Dc}{\mathcal{D}}
\newcommand{\Ec}{\mathcal{E}}
\newcommand{\Gc}{\mathcal{G}}
\newcommand{\Lc}{\mathcal{L}}
\newcommand{\Nc}{\mathcal{N}}
\newcommand{\Vc}{\mathcal{V}}
\newcommand{\Xc}{\mathcal{X}}
\newcommand{\argmin}{\text{argmin}}
\newcommand{\diag}{\text{diag}}
\newcommand{\col}{\text{col}}
\newcommand{\norm}[1]{\left\lVert#1\right\rVert}
\newtheorem{assumption}{Assumption}
\author{%
 \Name{Youbang Sun} \Email{ybsun@tamu.edu}\\
 \Name{Shahin Shahrampour} \Email{shahin@tamu.edu}\\
 \addr {Wm Michael Barnes `64 Department of Industrial \& Systems Engineering, Texas A\&M University}
}
\begin{document}

\maketitle

\begin{abstract}%
  Distributed optimization often requires finding the minimum of a global objective function written as a sum of local functions. A group of agents work collectively to minimize the global function. We study a continuous-time decentralized mirror descent algorithm that uses purely local gradient information to converge to the global optimal solution.  The algorithm enforces consensus among agents using the idea of {\it integral feedback}. Recently, \cite{sun2020distributed} studied the asymptotic convergence of this algorithm for when the global function is strongly convex but local functions are convex. Using control theory tools, in this work, we prove that the algorithm indeed achieves (local) exponential convergence. We also provide a numerical experiment on a real data-set as a validation of the convergence speed of our algorithm.
\end{abstract}

\begin{keywords}%
  Mirror Descent, Distributed Optimization, Integral Feedback, Continuous-time Dynamics%
\end{keywords}

\section{Introduction}

%  Introduce distributed optimization
Distributed gradient-based optimization is well-studied in the literature. Generally, the problem is to find the optimal solution for a global objective function that is a sum of local cost functions assigned to various agents. Each agent only has limited knowledge of the global problem, and the agents must work collectively to reach consensus around the optimum for the global objective function. Distributed optimization has applications in distributed resource allocation \cite{chavez1997challenger}, distributed sensor localization \cite{khan2009distributed}, distributed cooperative control \cite{qu2009cooperativecontrol}, social learning \cite{shahrampour2015distributed}, and beyond. %also good for reasons like reliability \cite{reddi2016stochastic}, scalability \cite{jogalekar2000evaluating} and privacy problems \cite{konevcny2015federated}.

Naturally, one of the most fundamental questions in distributed optimization is that whether a distributed algorithm is able to match the performance of its centralized counterpart. The basic idea of gradient descent with local averaging has proven to be a simple yet powerful approach. The seminal work of \cite{nedic2009distributed} is a prominent point in case, which shows this approach converges for convex problems using a {\it diminishing step-size} sequence, which decreases the influence of local gradients and allows all agents to reach consensus. However, as soon as assumptions like {\it smoothness} and/or {\it strong convexity} come into play, a diminishing step-size may no longer be optimal in centralized optimization, thereby being a sub-optimal choice for decentralized algorithms as well.

%It can be shown that simple gradient descent with local averaging will most likely fail to reach consensus, the agents will each converge to a different point within some locality of the global optimum. To overcome this, many approaches were taken, including diminishing step-sizes \cite{nedic2009distributed}, which decreases the influence of local gradient letting all agents to reach consensus. 
% However, algorithms with diminishing step-sizes will unavoidably converge slower due to its decreasing learning rate. 
A number of works proposed \textit{gradient tracking}, that uses an additional term to ensure consensus with non-decreasing step-sizes. This line of work includes EXTRA \cite{shi2015extra} and DEXTRA \cite{xi2017dextra}, where we can observe decentralized performances on par with their respective centralized problems. %Algorithm for non-convex problems have been discussed in \cite{di2016next}. 
In continuous-time distributed optimization, another approach, termed \textit{integral feedback}, has been used in the literature in a similar spirit. The integral feedback introduces another variable to account for differences between agents and helps the network reach consensus. Examples of recent works adopting this approach include \cite{cortes2013weightbalanceddigraph, kia2015distributed, 7744584, yang2016multi}.

% problems of gradient descent & introduce mirror descent
However, most of the recent works in distributed gradient-based optimization have focused on gradient descent. Although effective, gradient descent sometimes cannot yield desirable results by not exploiting the geometry of the problem. Mirror descent \cite{nemirovsky1983originalmd}, on the other hand, is widely used in large-scale optimization problems. Mirror descent replaces the Euclidean distance in gradient descent with {\it Bregman divergence} as the regularizer, and it can be viewed as a more general version of gradient descent. For some of high-dimensional optimization problems, mirror descent can provide significantly faster convergence rates compared to gradient descent \cite{ben2001ordered}.

% distributed mirror descent works
Motivated by the generality of mirror descent, in this work we focus on distributed mirror descent (DMD). Most of prior work on DMD is in discrete time (see e.g., \cite{shahrampour2017distributed,yuan2018optimal,7383850,li2016distributed,8409957}). With the exception of \cite{7383850},
%In practice, distributed mirror descent has also been applied to social learning and belief dynamics \cite{shahrampour2015distributed}, a comparison of convergence of iterates for both centralized and DMD has been provided by \cite{8409957}. 
the works above either use diminishing step-size sequence or multi-communications per round in order to reach consensus. For the same reasons mentioned for gradient descent, a diminishing step-size would not be optimal for strongly convex problems, resulting in slower convergence compared to centralized methods. In this work, we study continuous-time DMD with integral feedback, recently proposed in \cite{sun2020distributed}. The authors focused on a setup where the global objective is strongly convex but the local functions are convex, and they provided asymptotic convergence analysis. In the current work, we use dynamical systems tools (Lyapunov's indirect method) to prove the {\it local exponential} convergence of DMD with integral feedback. We also test our algorithm on a real data-set to show that the proposed algorithm indeed converges exponentially fast (or linearly in log-scale).

We remark that DMD in continuous time has also been studied prior to this work, mostly by focusing on reduction of noise variance in stochastic optimization \cite{borovykh2020interact,raginsky2012Variancereduction}. \cite{yu2020rlc} also motivate mirror descent using RLC circuits and utilize derivative and integration in the algorithm. The distinction between \cite{yu2020rlc} and the current work includes different assumptions on the objective functions, which yields different convergence results.

% \begin{itemize}
%   \item Limit the main text (not counting references) to 10 PMLR-formatted pages, using this template.
%   \item Include {\em in the main text} enough details, including proof details, to convince the reviewers of the contribution, novelty and significance of the submissions.
% \end{itemize}

\section{Problem Formulation}
{\bf Notation:} We let $[n]$ denote the set $\{1,2,3,\ldots,n\}$ for any integer $n$. $x^\top$ (and $A^\top$) denotes transpose of vector $x$ (and matrix $A$), respectively. $I_d$ represents identity matrix of size $d\times d$. We let $\1_d$ denote $d$-dimensional vector of all ones. $\langle x, y \rangle$ denotes the standard inner product between $x$ and $y$ and $\norm{x} = \sqrt{\langle x, x \rangle}$ is the Euclidean norm of vector $x$. $A\otimes B$ represents the Kronecker product of matrices $A$ and $B$. The $i$-th element of the vector $x$ is denoted by $[x]_i$, and the $ij$-th element of the matrix $A$ is denoted by $[A]_{ij}$. We let $det(A)$ denote the determinant of matrix $A$ and use $\col\{v_1, \ldots, v_n\}$ to denote the vector that stacks all vectors $v_i$ for $i\in [n]$. We use $\diag\{a_1, \ldots, a_n\}$ to represent an $n\times n$ diagonal matrix that has the scalar $a_i$ in its $i$-th diagonal element. We use $Re[\cdot]$ to denote the real part of a complex number. We use $0$ to represent the null vector and the null matrix when it is clear from the context.

\subsection{Distributed Optimization}\label{sec:dist-opt}
Distributed convex optimization consists of minimizing an objective function $F:\Xc \rightarrow \R $ defined on a compact and convex set $\Xc \subset \R^d$. $F$ is written as a sum of local cost functions, denoted by $f_i:\Xc \rightarrow \mathbb{R}$ for $i\in [n]$, and the cost function $f_i$ is associated with agent $i$. The minimization task is as follows
\begin{equation}
   \label{globalfunction}
   \underset{x \in \Xc }{\text{minimize}} \:\:\:\:\:\:F(x) = \sum_{i=1}^n f_i(x).
\end{equation}
In a distributed optimization setup, agents only have the information about their associated local functions, and the network of agents relies on communication between agents in order to find the solution to the global task presented in \eqref{globalfunction}. We now introduce some assumptions on the local and global functions.

\begin{assumption}\label{local_convex}
For any agent $i\in [n]$ in the network, we assume that the local cost function $f_i:\Xc \to\R$ is convex and differentiable.
\end{assumption}
From this assumption, we can immediately get that the global function $F$ is also convex and differentiable, but we impose an additional assumption on the global cost function as follows.
\begin{assumption}\label{global_assump}
The global function $F:\Xc \to\R$ is strongly convex. There exists a unique minimizer for $F$ and the optimal value denoted by $F^\star = F(x^\star)$ exists. The gradients of local functions $\nabla f_i(x)$ are locally continuously differentiable around $x^\star$.
\end{assumption} 
\subsection{Network Settings}
The agents form a network, modeled by a simple undirected graph $\Gc = (\Vc,\Ec)$, where the agents are denoted by nodes $\Vc = [n]$ and the connection between two agents $i$ and $j$ is captured by the edge $\{i,j\} \in \Ec$. The neighborhood of agent $i$ is denoted by $\Nc_i\triangleq\{j \in \Vc: \{i,j\}\in \Ec\}$. The agents work collectively to find the optimum of the global cost function, which is the sum of all local cost functions.  
\begin{assumption}\label{network_assump}
The graph $\Gc$ is connected, i.e., there exists a path between any two distinct agents $i,j \in \Vc$. The graph Laplacian is denoted by $\Lc \in \R^{n\times n}$.
\end{assumption}
The connectivity assumption implies that $\Lc$ has a unique null eigenvalue. That is, $\Lc \1_n=0$, and $\1_n$ is the only direction (eigenvector) recovering the zero eigenvalue. 

\subsection{Mirror Descent}
\label{subsec:MD}
We now provide a brief introduction of centralized mirror descent algorithm and explain the transition from discrete mirror descent (as mentioned in  \cite{nemirovsky1983originalmd} ) to a continuous-time setup. Later, in Section \ref{subsec:DMD} we derive the distributed mirror descent updates in continuous time. 

In gradient descent method, each iterate can be seen as an optimization problem on a simplified model, constructed by a first order approximation of a function plus a Euclidean regularizer. Mirror descent replaces the Euclidean regularizer with {\it Bregman divergence}. Bregman divergence is defined with respect to a distance generating function (DGF) $\phi:\Xc \to \R$, as follows
\begin{align}
    \Dc_{\phi} (x, x') \triangleq \phi(x) -\phi(x') - \langle \nabla\phi(x'), x-x'\rangle.
\end{align}
In discrete time, the mirror descent algorithm with learning rate $\eta$ is written as 
\begin{equation}\label{originalmd}
    \begin{aligned}
        x^{(k+1)} &= \underset{x\in \Xc}{\argmin} \bigg\{ F(x^{(k)}) + \eta \nabla F(x^{(k)})^\top (x - x^{(k)}) + \Dc_\phi(x, x^{(k)})\bigg\}.
    \end{aligned}
\end{equation}
Bregman divergence is regarded as a more general version of the Euclidean regularizer. When using the Euclidean distance as the Bregman divergence (i.e., $\Dc_\phi(x, x^{(k)})=\frac{1}{2}\|x-x^{(k)}\|^2$) we recover gradient descent. Hence, mirror descent is seen as a more general version of gradient descent.

\begin{assumption}\label{Assumption_phi}
 The distance generating function $\phi$ is closed, differentiable and $\mu_\phi$-strongly convex.
\end{assumption}

\begin{assumption}\label{Assumption_phi_local}
The Hessian of distance generating function, $\nabla^2 \phi$, is locally continuously differentiable around the neighborhood of $x^\star$ (the minimizer of F).
\end{assumption}
The two assumptions above on $\phi$ are satisfied by some of the commonly used Bregman divergences, such as $\phi(x) = \frac{1}{2}\norm{x}^2$, DGF of the Euclidean distance, and the negative entropy function $\phi(x) =  \sum_{j=1}^d [x]_j \log([x]_j)$, DGF of the Kullback–Leibler divergence.

Now, we introduce an equivalent form of the update above for more convenient analysis. This equivalent form is based on the {\it convex conjugate} (also known as {\it Fenchel dual}) of function $\phi$, which is denoted by $\phi^\star$ and defined as follows  
$$\phi^\star(z) \triangleq \underset{x \in \Xc }{\text{sup}}\{\langle x, z\rangle - \phi(x)\}.$$
From the definition, we can derive the the subsequent relationship, 
$$
z = \nabla\phi(x) \Longleftrightarrow x = \nabla\phi^\star(z).
$$
This means $\nabla \phi^\star$ will map the range of $z$ back to $\Xc$.
Assumption \ref{Assumption_phi} on the DGF $\phi$ guarantees the $\mu_\phi^{-1}$-smoothness property on $\phi^\star$ (see e.g.,
\cite{hiriart2012fundamentals}). Using the definition of $\phi^\star$, the update \eqref{originalmd} can be rewritten in the following equivalent form
\begin{equation} \label{step_by_step_md}
    \begin{aligned}
        % z^{(k)} &= \nabla\phi(x^{(k)})\\
        z^{(k+1)} &= z^{(k)} - \eta\nabla F(x^{(k)})\\
        x^{(k+1)} &= \nabla\phi^\star(z^{(k+1)}).
        % &\qquad+ \Dc_\phi(x, x^{(k)})\},
    \end{aligned}
\end{equation}
Then, the continuous-time update can be obtained by setting $\eta$ infinitesimally small as follows 
\begin{equation}
   \begin{aligned}
   \label{CT_CMD}
    \Dot{z} &= -\nabla F(x),\\
    x &=\nabla\phi^\star(z),\\
    x(0) = x_0, z(0)&=z_0 \:\:\text{with}\:\: x_0 =\nabla\phi^\star(z_0),
\end{aligned} 
\end{equation}
This setup was studied in \cite{krichene2015acceleratedmd}.
\subsection{Distributed Mirror Descent with Integral Feedback}
\label{subsec:DMD}
In this section, we introduce the distributed algorithm for mirror descent shown in \eqref{CT_CMD}. Our end goal is to have all agents converge to the global optimum in \eqref{globalfunction} and reach consensus. Motivated by \cite{cortes2013weightbalanceddigraph, kia2015distributed}, we use {\it integral feedback} to get
\begin{equation}
\label{setup}
\begin{aligned}
    \dot{z_i} &= - \nabla f_i(x_i) + \sum_{j \in \Nc_i} (x_j - x_i) + \int_0^t \sum_{j \in \Nc_i} (x_j - x_i)\\
     \vphantom{\int_0^t \sum_{j \in \Nc_i}}    x_i & =\nabla\phi^\star(z_i),\\
    % \vphantom{\int_0^t \sum_{j \in \Nc_i}}    
    &\text{with}~~~~x_i(0) = x_{i0},~~~~z_i(0) = z_{i0}, ~~~~~~\text{and}~~~~x_{i0} =\nabla\phi^\star(z_{i0}).
\end{aligned}
\end{equation}
The algorithm only utilizes gradient information of the local costs. The first equation updates the dual variable $z_i$ using gradient information, a consensus term, and the integral feedback. Then, the second equation updates the primal variable by mirroring the dual variable back with function $\phi^\star$. For convenience, we stack vectors from all agents and define the following notation,
\begin{equation}
\label{rename}
\begin{aligned}
    \Lb & \triangleq \Lc \otimes I_d\\
    \xb &\triangleq \col\{x_1, x_2,\ldots,x_n\}\\
    \zb & \triangleq \col\{z_1, z_2,\ldots,z_n\},\\
    \nabla\phi^\star(\zb) & \triangleq \col\{\nabla\phi^\star(z_1),\nabla\phi^\star(z_2),\ldots,\nabla\phi^\star(z_n)\}\\
    \nabla f(\xb) & \triangleq \col\{\nabla f_1(x_1), \nabla f_2(x_2),\ldots, \nabla f_n(x_n)\}.
\end{aligned}
\end{equation}
Additionally, we introduce a variable $\yb$ to replace the integral. Then, the dynamical system \eqref{setup} can be written using the newly defined notations,
\begin{equation}
    \begin{aligned}
    \Dot{\mathbf{z}} &= -(\nabla f(\mathbf{x}) + \mathbf{L} \mathbf{x} +   \mathbf{y}), \label{setup_with_y}\\
    \Dot{\mathbf{y}} &=  \mathbf{L}\mathbf{x},\\
    \xb &= \nabla\phi^\star(\zb),
\end{aligned}
\end{equation}
where $\mathbf{y}\in \mathbb{R}^{nd}$ and $\mathbf{y}(0)=\0.$

\section{Main Results}
In this section, we provide the convergence results of \eqref{setup_with_y}. In particular, we prove that under our assumptions, all agents in the network will converge exponentially fast to the global minimum of $F$ in \eqref{globalfunction}. In a previous work, the authors showed that under a subset of assumptions, the algorithm will asymptotically converge to the global optimum (without providing the rate).
\begin{theorem}\label{theorem1}[\cite{sun2020distributed}] 
    Given Assumptions \ref{local_convex}-\ref{Assumption_phi}, for any starting point $x_i(0) = x_{i0}, z_i(0) = z_{i0}$ with $x_{i0}=\nabla\phi^\star(z_{i0})$, the distributed mirror descent algorithm with integral feedback proposed in \eqref{setup} will converge to the global optimum asymptotically, i.e., $\lim_{t\to\infty} x_i(t)=x^\star$ for any $i\in [n]$.
\end{theorem}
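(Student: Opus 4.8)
The plan is to treat \eqref{setup_with_y} as a nonlinear dynamical system in the state $(\zb,\yb)$, with $\xb=\nabla\phi^\star(\zb)$ a static output, and to prove convergence to its unique equilibrium by a Lyapunov/LaSalle argument. First I would characterize the equilibrium. Setting $\dot\yb=\0$ gives $\Lb\xb=\0$, and since the graph is connected (Assumption \ref{network_assump}) the null space of $\Lb=\Lc\otimes I_d$ is $\1_n\otimes\R^d$, so $x_i=\bar x$ for all $i$ (consensus). Setting $\dot\zb=\0$ then forces $\yb=-\nabla f(\xb)$. The crucial conservation law is that $(\1_n^\top\otimes I_d)\dot\yb=(\1_n^\top\otimes I_d)\Lb\xb=\0$ (because $\1_n^\top\Lc=\0$) together with $\yb(0)=\0$ keeps $\sum_i y_i(t)\equiv\0$, equivalently $\yb(t)\in\image(\Lb)$, for all $t$. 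Evaluated at equilibrium this gives $\sum_i\nabla f_i(\bar x)=\nabla F(\bar x)=\0$, and strong convexity of $F$ (Assumption \ref{global_assump}) makes $\bar x=x^\star$ the unique solution. Hence the only equilibrium in the invariant affine set is $\xb^\star=\1_n\otimes x^\star$, $\zb^\star=\1_n\otimes\nabla\phi(x^\star)$, and $\yb^\star=-\nabla f(\xb^\star)$, which indeed lies in $\image(\Lb)$ since $\nabla F(x^\star)=\0$.

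The core of the proof is the Lyapunov function. For the primal--dual mirror structure I would measure progress with the Bregman divergence of the conjugate and, crucially, weight the integral error by the \emph{pseudo-inverse} of the Laplacian:
\[
V(\zb,\yb)=\sum_{i=1}^n\Dc_{\phi^\star}(z_i,z^\star)+\tfrac12(\yb-\yb^\star)^\top\Lb^{\dagger}(\yb-\yb^\star).
\]
The pseudo-inverse weighting is admissible precisely because the conservation law confines $\yb-\yb^\star$ to $\image(\Lb)$, on which $\Lb^{\dagger}\succ0$; thus the second term is nonnegative and vanishes only at $\yb=\yb^\star$. For the first term I would invoke the Bregman duality identity $\Dc_{\phi^\star}(z_i,z^\star)=\Dc_{\phi}(x^\star,x_i)$ (valid since $z_i=\nabla\phi(x_i)$, $z^\star=\nabla\phi(x^\star)$), which by $\mu_\phi$-strong convexity of $\phi$ (Assumption \ref{Assumption_phi}) yields $\Dc_{\phi^\star}(z_i,z^\star)\ge\tfrac{\mu_\phi}{2}\norm{x_i-x^\star}^2$. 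Hence $V$ is positive definite in $(\xb-\xb^\star,\yb-\yb^\star)$ and behaves as a genuine distance to the equilibrium.

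Differentiating along trajectories, using $\tfrac{d}{dt}\Dc_{\phi^\star}(z_i,z^\star)=\inn{x_i-x^\star,\dot z_i}$ and $\tfrac{d}{dt}\tfrac12(\yb-\yb^\star)^\top\Lb^{\dagger}(\yb-\yb^\star)=(\yb-\yb^\star)^\top\Lb^{\dagger}\Lb(\xb-\xb^\star)=\inn{\yb-\yb^\star,\xb-\xb^\star}$ (here $\Lb^{\dagger}\Lb$ is the orthogonal projection onto $\image(\Lb)$, which fixes $\yb-\yb^\star$), the indefinite cross terms cancel exactly and I would obtain
\[
\dot V=-\inn{\xb-\xb^\star,\ \nabla f(\xb)-\nabla f(\xb^\star)}-(\xb-\xb^\star)^\top\Lb(\xb-\xb^\star)\le0,
\]
where the first term is nonpositive by convexity of each $f_i$ (Assumption \ref{local_convex}) and the second by $\Lb\succeq0$. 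To finish I would apply LaSalle's invariance principle. On $\{\dot V=0\}$ both terms vanish: $(\xb-\xb^\star)^\top\Lb(\xb-\xb^\star)=0$ forces consensus $\xb=\1_n\otimes\bar x$, and then the first term reduces to $\inn{\bar x-x^\star,\nabla F(\bar x)-\nabla F(x^\star)}=0$, which by strong convexity of $F$ forces $\bar x=x^\star$. Thus $\{\dot V=0\}$ already pins $\xb=\xb^\star$ (hence $\zb=\zb^\star$ by injectivity of $\nabla\phi^\star$); on the largest invariant subset $\xb\equiv\xb^\star$ gives $\zb\equiv\zb^\star$ and so $\dot\zb\equiv\0$, which forces $\yb\equiv\yb^\star$. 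The only invariant set is therefore the equilibrium, and LaSalle yields $x_i(t)\to x^\star$ for every $i$.

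I expect the main obstacle to be the \emph{construction} of $V$ rather than its verification: the naive choice $\tfrac12\norm{\yb-\yb^\star}^2$ leaves an uncancelled indefinite term $\inn{\yb-\yb^\star,(\Lb-I)(\xb-\xb^\star)}$, and it is the $\Lb^{\dagger}$-weighting---legitimized only by the invariance $\yb-\yb^\star\in\image(\Lb)$---that restores the telescoping needed for $\dot V\le0$. A secondary technical point is establishing precompactness of trajectories so that LaSalle applies: $V$ bounds $\norm{\xb-\xb^\star}$ and $\norm{\yb-\yb^\star}$ directly, but one must also ensure $\zb=\nabla\phi(\xb)$ stays bounded (e.g., $x^\star$ in the interior of $\Xc$ so the mirror map does not blow up), which is where compactness of $\Xc$ and the regularity of $\phi$ enter.
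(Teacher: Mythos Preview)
The paper does not actually contain a proof of Theorem~\ref{theorem1}; it states explicitly that ``the proof of this theorem can be found in \cite{sun2020distributed}'' and only records the resulting equilibrium $\xb^\star,\yb^\star,\zb^\star$. So there is no in-paper argument to compare against directly.

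That said, your proposal is correct and is precisely the kind of Lyapunov/LaSalle argument the cited reference uses. Two remarks on the relationship to the present paper. First, your $\Lb^{\dagger}$-weighted integral term is equivalent to the paper's change of variables $\yb=\Sb\wb$ with $\Sb=\Lb^{1/2}$: on the invariant subspace $\yb-\yb^\star\in\image(\Lb)$ one has $(\yb-\yb^\star)^\top\Lb^{\dagger}(\yb-\yb^\star)=\norm{\Wb_2}^2$, so your ``pseudo-inverse'' device and the paper's square-root/coordinate-reduction device are the same object in different clothing. Second, the conservation law you use, $(\1_n^\top\otimes I_d)\yb(t)\equiv 0$, is exactly what the paper exploits when it shows $\Wb_1\equiv 0$. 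So while the paper performs these manipulations in service of the linearization for Theorem~\ref{main:thm}, they are the same structural facts that make your Lyapunov function work for Theorem~\ref{theorem1}.

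Your identification of the two delicate points is apt: (i) the naive quadratic $\tfrac12\norm{\yb-\yb^\star}^2$ fails to cancel the cross term, and it is only the $\Lb^{\dagger}$ (equivalently $\wb$) weighting---valid because $\yb-\yb^\star$ lives in $\image(\Lb)$---that yields $\dot V\le 0$; and (ii) precompactness of $\zb$ requires care when $\nabla\phi$ blows up at $\partial\Xc$. For (ii), note that $V$ already bounds $\Dc_\phi(x^\star,x_i)$ and $\norm{\yb-\yb^\star}$, so working with the state $(\xb,\yb)$ (or invoking radial unboundedness of the Bregman divergence together with $x^\star\in\mathrm{int}\,\Xc$) suffices for LaSalle.
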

The proof of this theorem can be found in \cite{sun2020distributed}, where it is also shown that agents reach consensus at the global optimal point, which is the unique equilibrium of the dynamical system \eqref{setup_with_y}. The equilibrium point for $\xb, \yb, \zb$ is denoted by 
$$\xb^\star = \1_n \otimes x^\star, \quad \yb^\star = -\nabla f(\xb^\star), \quad \zb^\star = \1_n \otimes z^\star = \1_n \otimes \nabla \phi(x^\star).$$

% $, \yb^\star, \zb^\star$, respectively, it can also be shown that $\yb^\star = -\nabla f(\xb^\star)$
\subsection{Coordinate Transformation}
We use the change of variables in \cite{sun2020distributed} for further analysis. Let $\Sb = \Lb^{\frac{1}{2}}$, and recall that $\Lb = \Lc \otimes I_d$ is a symmetric positive semi-definite matrix. We then introduce a new variable $\wb(t)=\Sb\int_0^t\xb(\tau)d\tau$. From \eqref{setup_with_y} it is easy to show that $\yb = \Sb \wb$. We then center the variables by moving the system's equilibrium to the origin as follows
\begin{equation}
    \begin{aligned}
        \Tilde{\xb} \triangleq \xb - \xb^\star, \quad
        \Tilde{\yb} \triangleq \yb - \yb^\star,\quad
        \Tilde{\wb} \triangleq \wb - \wb^\star,\quad
        \Tilde{\zb} \triangleq \zb - \zb^\star.
    \end{aligned}
\end{equation}
The first two equations in \eqref{setup_with_y} can be rewritten as 
\begin{equation}
\label{setup_recentered}
    \begin{aligned}
    \Dot{\Tilde{\zb}} &=  -(\nabla f(\Tilde{\xb} + \xb^\star) - \nabla f(\xb^\star)) - \mathbf{L} \Tilde{\xb}- \mathbf{S}\Tilde{\wb} ,\\
    \Dot{\Tilde{\wb}} &=  \Sb \Tilde{\xb}, 
\end{aligned}
\end{equation}
Next, we perform a dimension reduction on variable $\Tilde{\wb}$. Define $r \triangleq \frac{1}{\sqrt{n}}\1_n$ and let $\Lc = Q \Lambda Q^\top$, where $\Lambda = diag\{0, \lambda_1, ..., \lambda_{n-1}\}$. From Assumption \ref{network_assump} it is clear that $r$ is the first column of $Q$. We then define $R \in \R^{n\times(n-1)}$ such that $Q = [r,R]$. The following relationships follow subsequently
\begin{equation}
    r^\top R = 0, \quad R^\top R = I_{n-1}, \quad RR^\top = I_n - \frac{1}{n} \1_n \1_n^\top, \quad r^\top \Lc r = 0, \quad R^\top \Lc R \succ 0.
\end{equation}
Now, let
\begin{equation}
    \rb \triangleq r\otimes I_d,\quad \Rb \triangleq R\otimes I_d, \quad \Qb \triangleq Q \otimes I_d,
\end{equation}
and define new vectors by the following transformations from $\Tilde{\wb}$, 
$$\Wb \triangleq \Qb^\top \Tilde{\wb} = \begin{bmatrix}\rb^\top \\ \Rb^\top \end{bmatrix} \Tilde{\wb} = \begin{bmatrix}\rb^\top \Tilde{\wb}\\ \Rb^\top \Tilde{\wb}\end{bmatrix} = \begin{bmatrix}\Wb_1\\ \Wb_2\end{bmatrix}.$$
Note that for $\Wb_1$, from \eqref{setup_recentered} we can derive that
$$\dot{\Wb}_1 = \rb^\top \dot{\Tilde{\wb}} = \rb^\top \Sb \Tilde{\xb} = 0, \quad \Wb_1(0) = -\rb^\top \wb^\star = 0.$$
%the latter equation is true because the system equilibrium point is consensus at global optimum, $(\1_n \otimes I_d)\nabla f(\xb^\star) = \nabla F(x^\star) = 0$. 
Therefore $\Wb_1 \equiv 0$ for all time $t$, and the system variable can be represented by $\Wb_2$ only. $\Tilde{\wb} = \begin{bmatrix}\rb& \Rb \end{bmatrix} \Wb = \rb \Wb_1 + \Rb \Wb_2 = \Rb \Wb_2$.

Furthermore, we replace variable $\zb$ with $\xb$. Since $\zb = \nabla \phi(\xb)$, we have
$$\dot{\Tilde{\zb}} = \frac{d}{dt}(\zb - \zb^\star) = \nabla^2 \phi(\xb) \dot{\Tilde{\xb}}.$$
Assumption \ref{Assumption_phi} implies that $\nabla^2 \phi(\xb)$ is positive definite and therefore invertible.
Now, we can rewrite the system in \eqref{setup_recentered} using only variables $\Tilde{\xb}$ and $\Wb_2$ as follows
\begin{equation}
\label{setup_reduce_dim}
    \begin{aligned}
    \Dot{\Tilde{\xb}} &=  - \nabla^2 \phi(\Tilde{\xb} + \xb^\star)^{-1}(\nabla f(\Tilde{\xb} + \xb^\star) - \nabla f(\xb^\star) + \mathbf{L} \Tilde{\xb}+ \mathbf{S}\Rb\Wb_2) ,\\
    \Dot{\Wb}_2 &=  \Rb^\top \Sb \Tilde{\xb}, 
\end{aligned}
\end{equation}
Thus, the (exponential) stability of \eqref{setup_with_y} can be analyzed using the (exponential) stability of \eqref{setup_reduce_dim}.

\subsection{Exponential Convergence}
With the system transformation in place, we can discuss the convergence and stability of distributed mirror descent (with integral feedback) in the following theorem.
\begin{theorem}\label{main:thm}{(Main Result)}
    Given Assumptions \ref{globalfunction}-\ref{Assumption_phi_local}, the origin is a locally exponentially stable equilibrium of \eqref{setup_with_y} and \eqref{setup_reduce_dim}.
\end{theorem}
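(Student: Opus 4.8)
The plan is to apply Lyapunov's indirect method to the reduced system \eqref{setup_reduce_dim}: I will linearize its vector field at the origin, exhibit the resulting Jacobian $J$, and show that $J$ is Hurwitz; local exponential stability of both \eqref{setup_reduce_dim} and \eqref{setup_with_y} then follows. Assumptions \ref{global_assump} and \ref{Assumption_phi_local} guarantee that $\nabla f_i$ and $\nabla^2\phi$ are continuously differentiable near $x^\star$, so the right-hand side of \eqref{setup_reduce_dim} is $C^1$ in a neighborhood of the origin and the linearization is legitimate. Writing $\Phib\triangleq\nabla^2\phi(\xb^\star)=I_n\otimes\nabla^2\phi(x^\star)\succ0$ and $\Hb\triangleq\diag\{\nabla^2 f_1(x^\star),\ldots,\nabla^2 f_n(x^\star)\}\succeq0$, and noting that the variation of the Hessian enters only through already first-order terms, the Jacobian at the origin is
\[
J=\begin{bmatrix}-\Phib^{-1}(\Hb+\Lb) & -\Phib^{-1}\Sb\Rb\\[3pt] \Rb^\top\Sb & 0\end{bmatrix}.
\]

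To certify that $J$ is Hurwitz I would use the quadratic function $V(\Tilde{\xb},\Wb_2)=\tfrac12\Tilde{\xb}^\top\Phib\,\Tilde{\xb}+\tfrac12\Wb_2^\top\Wb_2$, which is positive definite since $\Phib\succ0$. Differentiating along the linearized dynamics, the two cross terms $-\Tilde{\xb}^\top\Sb\Rb\Wb_2$ and $\Wb_2^\top\Rb^\top\Sb\Tilde{\xb}$ cancel — here I use that $\Sb=\Lb^{1/2}$ is symmetric, so the second term is the transpose (hence equal) of the first — leaving $\dot V=-\Tilde{\xb}^\top(\Hb+\Lb)\Tilde{\xb}\le0$.

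The main obstacle is precisely that this bound is only negative \emph{semi}definite: Assumption \ref{local_convex} makes each $\nabla^2 f_i(x^\star)$ merely positive semidefinite, so $\Hb+\Lb$ is not positive definite and $V$ is not a strict Lyapunov function. The resolution is to couple the strong convexity of the global objective (Assumption \ref{global_assump}) with connectivity (Assumption \ref{network_assump}) via a LaSalle-type argument. On $\{\dot V=0\}$ one has $(\Hb+\Lb)^{1/2}\Tilde{\xb}=0$, i.e. $\Tilde{\xb}\in\ker\Lb\cap\ker\Hb$; every vector in $\ker\Lb$ has the consensus form $\Tilde{\xb}=\1_n\otimes\xi$, and $\Hb\Tilde{\xb}=0$ then forces $\nabla^2 f_i(x^\star)\xi=0$ for all $i$, so $\nabla^2F(x^\star)\xi=\sum_i\nabla^2 f_i(x^\star)\xi=0$; since $\nabla^2F(x^\star)\succ0$ this gives $\xi=0$, hence $\Tilde{\xb}=0$. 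On the largest invariant subset of $\{\dot V=0\}$ we moreover have $\Tilde{\xb}\equiv0$, so $\dot{\Tilde{\xb}}=0$ forces $\Sb\Rb\Wb_2=0$; because $\Rb\Wb_2$ lies in the disagreement subspace orthogonal to $\ker\Lb=\ker\Sb$ and $\Rb$ has full column rank, this yields $\Wb_2=0$. Thus the only invariant set in $\{\dot V=0\}$ is the origin, the linear system is asymptotically stable, and for a linear time-invariant system this is equivalent to $J$ being Hurwitz.

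Finally, Lyapunov's indirect method upgrades the Hurwitz property of $J$ to local exponential stability of the origin for the nonlinear system \eqref{setup_reduce_dim}, and since the variable changes relating \eqref{setup_with_y} and \eqref{setup_reduce_dim} — the diffeomorphism $\zb=\nabla\phi(\xb)$ (invertible by Assumption \ref{Assumption_phi}) together with the linear full-rank reduction $\Tilde{\wb}=\Rb\Wb_2$ — are local diffeomorphisms fixing the origin, the property transfers back to \eqref{setup_with_y}, completing the proof. As an alternative to LaSalle that makes the obstacle fully explicit, I note that substituting any eigenpair $(u,v)$ of $J$ with eigenvalue $\lambda$ into $V$ produces the scalar quadratic $c\lambda^2+a\lambda+b=0$ with $c=u^{*}\Phib u>0$, $a=u^{*}(\Hb+\Lb)u\ge0$, and $b=\norm{\Rb^\top\Sb u}^2\ge0$; the only way to get $Re[\lambda]=0$ is $a=0$, which by the argument above forces $u=0$ and then $v=0$, contradicting $(u,v)\neq 0$, so every eigenvalue satisfies $Re[\lambda]<0$.
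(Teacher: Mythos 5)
Your proof is correct in its essentials, and it takes a genuinely different route from the paper's for the key step. Both arguments linearize \eqref{setup_reduce_dim} at the origin (your Jacobian $J$ is exactly $-\Mb$ in \eqref{setup_linear}) and finish with Lyapunov's indirect method; the difference is in how the linearized matrix is certified Hurwitz. The paper (Lemma \ref{eigenval_proof}) works with the characteristic polynomial: it rules out $\lambda=0$ via a Schur-complement computation of $det(\Mb)$, reduces $det(\Mb-\lambda I)=0$ to $det\bigl((\Hb+\Lb)-\lambda\Db^{-1}-\tfrac{1}{\lambda}\Lb\bigr)=0$, and reads off $Re[\lambda]>0$ from the roots of a scalar quadratic, leaning on Lemma \ref{H+L}. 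You instead run a Lyapunov/LaSalle argument on the linear system with $V=\tfrac{1}{2}\Tilde{\xb}^\top\Phib\Tilde{\xb}+\tfrac{1}{2}\Wb_2^\top\Wb_2$, use invariance to force $\Wb_2=0$, and invoke the equivalence of asymptotic stability and the Hurwitz property for LTI systems. This is a valid and arguably more transparent substitute for the determinant manipulations, and your kernel computation on $\{\dot V=0\}$ reproduces exactly the content of the paper's Lemma \ref{H+L}; it also hands you a quadratic Lyapunov function that could in principle be pushed onto the nonlinear system to estimate the region of exponential attraction, which the paper's eigenvalue route does not provide. Two caveats. First, your framing of ``the main obstacle'' is off: under Assumptions \ref{global_assump} and \ref{network_assump} the matrix $\Hb+\Lb$ \emph{is} positive definite --- that is precisely Lemma \ref{H+L}, and your own argument ($\ker\Hb\cap\ker\Lb=\{0\}$ via consensus vectors and $\nabla^2F(x^\star)\succ0$) proves it; the true reason $V$ is not strict is that $\dot V=-\Tilde{\xb}^\top(\Hb+\Lb)\Tilde{\xb}$ does not involve $\Wb_2$ at all, which is why LaSalle (or an eigenvalue computation) is genuinely needed. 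Second, your ``alternative'' eigenvalue argument has a small gap: the quadratic $c\lambda^2+a\lambda+b=0$ is obtained by substituting $v=\tfrac{1}{\lambda}\Rb^\top\Sb u$, so it presupposes $\lambda\neq0$, and when $a>0$, $b=0$ that quadratic has the root $\lambda=0$; hence ``$Re[\lambda]=0$ only if $a=0$'' is not literally true, and $\lambda=0$ must be excluded separately (as the paper does via $det(\Mb)>0$, or by noting that $\lambda=0$ forces $\Rb^\top\Sb u=0$, hence $u^{*}(\Hb+\Lb)u=0$, hence $u=0$ and then $\Sb\Rb v=0$ gives $v=0$). Since that argument is offered only as an aside, your main LaSalle proof stands.
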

\begin{proof}
If we linearize the system \eqref{setup_reduce_dim} at the origin, using the notation $\nabla^2 \phi(\Tilde{\xb} + \xb^\star)^{-1}|_{\Tilde{\xb} = 0} = \Db$, $\nabla^2 f(\Tilde{\xb} + \xb^\star)|_{\Tilde{\xb} = 0} = \Hb$, the linearized version of \eqref{setup_reduce_dim} is
\begin{equation}\label{setup_linear}
    \begin{bmatrix} \Dot{\Tilde{\xb}}\\\Dot{\Wb}_2\end{bmatrix} = -
    \Mb
    \begin{bmatrix} {\Tilde{\xb}}\\{\Wb}_2\end{bmatrix}, ~~~\text{where}~~~
    \Mb \triangleq \begin{bmatrix}
   \Db   (\Hb+ \Lb) & \Db \mathbf{S}\mathbf{R}\\
   -\mathbf{R}^\top   \mathbf{S} &0
    \end{bmatrix}.
\end{equation}
We denote by $\lambda_1, ..., \lambda_{(2n-1)d}$ the eigenvalues of the linearized system matrix $\Mb$ in \eqref{setup_linear}. Based on Lemma \ref{eigenval_proof}, $Re[\lambda_i]>0 $ for all eigenvalues. Lemma \ref{eigenval_proof} and its proof are provided later in the paper. Now, from Theorem 3.2 in \cite{khalil2014nonlinear}, since $Re[\lambda_i]>0 $, the equilibrium of system \eqref{setup_reduce_dim}, as well as the equilibrium of system \eqref{setup_with_y} given by Theorem \ref{theorem1}, are both locally exponentially stable. This means there exists $\delta >0$ such that for any $\norm{col\{\xb, \yb, \zb\} - col\{\xb^\star, \yb^\star, \zb^\star\}}\leq \delta,$ the system state variables converge to the equilibrium (global optimal solution) exponentially fast. 
    \end{proof}
Recall from Theorem \ref{theorem1} that the system \eqref{setup_with_y} also exhibits global asymptotic convergence to the equilibrium. Then, for any starting point for $col\{\xb, \yb, \zb\}$, the state variables can converge to a neighborhood of radius $\delta$ of equilibrium in a finite time $T(\delta)$. Combined with the exponential convergence rate within the ball, this means that (after a finite time), the system exhibits exponential convergence to the global optimal solution. 

We now provide the following two lemmas used in the proof of Theorem \ref{main:thm}.
\begin{lemma}\label{H+L}
Given Assumptions \ref{globalfunction}-\ref{network_assump}, the matrix $(\Hb +\Lb)$ is positive definite.
\end{lemma}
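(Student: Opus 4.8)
The plan is to exploit that both summands are positive semidefinite and that their quadratic forms cannot vanish simultaneously on a nonzero vector. First I would record the structure of each block. Since $\nabla f(\xb) = \col\{\nabla f_1(x_1), \ldots, \nabla f_n(x_n)\}$, its Jacobian at the equilibrium is block diagonal, so $\Hb = \diag\{\nabla^2 f_1(x^\star), \ldots, \nabla^2 f_n(x^\star)\}$. By Assumption~\ref{local_convex} each $\nabla^2 f_i(x^\star) \succeq 0$, hence $\Hb \succeq 0$. Likewise $\Lb = \Lc \otimes I_d \succeq 0$, because the graph Laplacian is positive semidefinite and the Kronecker product with $I_d$ preserves this. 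By Assumption~\ref{network_assump} the null space of $\Lc$ is exactly $\mathrm{span}\{\1_n\}$, so the null space of $\Lb$ is $\{\1_n \otimes v : v \in \R^d\}$. Consequently $\Hb + \Lb \succeq 0$, and it remains only to verify that the quadratic form is strictly positive off the origin.

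Next I would argue by contradiction. Suppose $\vb^\top(\Hb + \Lb)\vb = 0$ for some $\vb \neq 0$. Since $\vb^\top \Hb \vb \geq 0$ and $\vb^\top \Lb \vb \geq 0$, both terms must vanish. The vanishing of $\vb^\top \Lb \vb$ forces $\vb \in \mathrm{null}(\Lb)$ (for a positive semidefinite matrix the quadratic form vanishes only on the kernel), so $\vb = \1_n \otimes v$ for some $v \in \R^d$. Substituting this into the Hessian term and using that $F = \sum_i f_i$ gives
$$\vb^\top \Hb \vb = \sum_{i=1}^n v^\top \nabla^2 f_i(x^\star) v = v^\top \nabla^2 F(x^\star) v = 0.$$
Assumption~\ref{global_assump} states that $F$ is strongly convex, so $\nabla^2 F(x^\star) \succ 0$, which forces $v = 0$ and hence $\vb = 0$, contradicting $\vb \neq 0$. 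Therefore $\Hb + \Lb \succ 0$.

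The computation is essentially routine; the only point that needs care is the bridge between the \emph{local} and \emph{global} curvature. Neither $\Hb$ nor $\Lb$ is positive definite on its own (the local functions are merely convex, and $\Lb$ is singular along the consensus directions), so the argument hinges on the observation that the two deficient subspaces are complementary: $\Lb$ is degenerate only on the consensus directions $\1_n \otimes v$, and precisely along those directions the quadratic form of $\Hb$ collapses to the global Hessian $\nabla^2 F(x^\star)$, which strong convexity makes nondegenerate. Making this intersection-of-kernels argument precise is the crux of the proof.
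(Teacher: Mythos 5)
Your proof is correct and follows essentially the same route as the paper's: both decompose the quadratic form $v^\top(\Hb+\Lb)v$ into two nonnegative pieces, identify the kernel of $\Lb$ with the consensus directions $\1_n \otimes u$, and observe that along those directions $v^\top \Hb v$ collapses to $u^\top \nabla^2 F(x^\star) u > 0$ by strong convexity of $F$. Your version merely adds some explicit bookkeeping (the block-diagonal structure of $\Hb$, the kernel characterization of a PSD quadratic form, and a contradiction framing) that the paper leaves implicit.
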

\begin{proof}
First, $(\Hb +\Lb)$ is symmetric since both $\Hb$ and $\Lb$ are symmetric. For any non-zero vector $v \in \R^{nd}$, from Assumptions \ref{globalfunction} and \ref{network_assump}, we know that $$v^\top\Hb v \geq 0, v^\top \Lb v \geq 0 \Rightarrow v^\top (\Hb + \Lb)v = v^\top\Hb v + v^\top \Lb v\geq 0.$$ Furthermore, Since $\1_n$ is the unique eigenvector of $\Lc$ recovering the null eigenvalue, when $v^\top \Lb v = 0$, $v$ must satisfy $v = \1_n \otimes u$ for some $u$. Then, Assumption \ref{global_assump} ensures $v^\top\Hb v = (\1_n \otimes u)^\top \Hb (\1_n \otimes u) = u^\top \nabla^2 F(x^\star) u>0$. This shows that the symmetric matrix $\Hb + \Lb$ is positive definite.
\end{proof}

\begin{lemma}\label{eigenval_proof} Given Assumptions \ref{globalfunction}-\ref{Assumption_phi_local}, 
$Re[\lambda_i]>0$ for all eigenvalues $\lambda_1, ..., \lambda_{(2n-1)d}$ of $\Mb = \begin{bmatrix}
   \Db   (\Hb+ \Lb) & \Db \mathbf{S}\mathbf{R}\\
   -\mathbf{R}^\top   \mathbf{S} &0
    \end{bmatrix}$ .
\end{lemma}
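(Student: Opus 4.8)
The plan is to exploit the structure of $\Mb$ as a product of a positive-definite matrix and a matrix whose symmetric part is only positive \emph{semi}-definite, and then to rule out eigenvalues on the imaginary axis as a separate step. Concretely, I would write $\Mb = \Dc\,\Ac$, where
$$\Dc \triangleq \begin{bmatrix}\Db & 0\\ 0 & I\end{bmatrix}, \qquad \Ac \triangleq \begin{bmatrix}\Hb+\Lb & \Sb\Rb\\ -\Rb^\top\Sb & 0\end{bmatrix}.$$
One checks the four blocks of $\Dc\Ac$ reproduce $\Mb$ exactly. Here $\Dc\succ 0$ because $\Db = \nabla^2\phi(\xb^\star)^{-1}$ is positive definite by Assumption \ref{Assumption_phi}, while the symmetric part of $\Ac$ is $\tfrac12(\Ac+\Ac^\top) = \diag\{\Hb+\Lb,\,0\}\succeq 0$, since the off-diagonal coupling $\Sb\Rb$ versus $-\Rb^\top\Sb$ is skew-symmetric. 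By Lemma \ref{H+L} the upper block $\Hb+\Lb\succ 0$, so the only way this symmetric part can annihilate a vector is through its lower (zero) block.

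First I would establish $Re[\lambda_i]\ge 0$ for every eigenvalue. Let $\lambda$ be an eigenvalue of $\Mb$ with eigenvector $\xi \triangleq \begin{bmatrix}u\\ v\end{bmatrix}\neq 0$ (possibly complex). From $\Dc\Ac\,\xi=\lambda\,\xi$ and invertibility of $\Dc$ we get $\Ac\,\xi=\lambda\,\Dc^{-1}\xi$. Left-multiplying by $\xi^*$ and taking real parts gives
$$Re[\lambda]\cdot\big(\xi^*\Dc^{-1}\xi\big) = Re\big[\xi^*\Ac\,\xi\big] = \tfrac12\,\xi^*(\Ac+\Ac^\top)\xi = u^*(\Hb+\Lb)u \ge 0,$$
where I used that $\Dc^{-1}$ is Hermitian positive definite, so $\xi^*\Dc^{-1}\xi$ is a strictly positive real scalar. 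Dividing through shows $Re[\lambda]\ge 0$.

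The main obstacle is upgrading this to a \emph{strict} inequality, i.e.\ showing no eigenvalue lies on the imaginary axis. Suppose $Re[\lambda]=0$. The displayed identity forces $u^*(\Hb+\Lb)u=0$, and since $\Hb+\Lb\succ0$ by Lemma \ref{H+L}, this gives $u=0$. Substituting $u=0$ into the two block rows of $\Mb\xi=\lambda\xi$ yields $\Db\Sb\Rb v = 0$ and $\lambda v = 0$. Because $\Db$ is invertible, the first relation reduces to $\Sb\Rb v=0$; multiplying by $\Rb^\top\Sb^\top$ and using $\Sb^\top\Sb=\Sb^2=\Lb$ gives $v^*\Rb^\top\Lb\Rb\, v = 0$. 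But $\Rb^\top\Lb\Rb = (R^\top\Lc R)\otimes I_d\succ0$ by Assumption \ref{network_assump}, since the reduced Laplacian is positive definite on the complement of the consensus direction; hence $v=0$. This contradicts $\xi\neq 0$, so $Re[\lambda]=0$ is impossible and $Re[\lambda_i]>0$ for all $i$, as claimed. I expect the only delicate points to be confirming the skew-symmetry of the coupling (which makes the symmetric part block-diagonal) and verifying the injectivity of $\Sb\Rb$ on the reduced coordinates, both of which hinge on the relationships $\Sb^2=\Lb$ and $R^\top\Lc R\succ0$ recorded in the coordinate-transformation step.
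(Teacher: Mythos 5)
Your proposal is correct, and it takes a genuinely different route from the paper. The paper proceeds through determinants: it first computes $det(\Mb)>0$ via a Schur-complement factorization to exclude $\lambda=0$, then reduces the characteristic polynomial (again by Schur complement, which requires dividing by $\lambda$) to $det\big((\Hb+\Lb)-\lambda\Db^{-1}-\tfrac{1}{\lambda}\Lb\big)=0$, and finally argues that any root of the resulting scalar quadratic $\lambda^2\, v^\top\Db^{-1}v-\lambda\, v^\top(\Hb+\Lb)v+v^\top\Lb v=0$ has positive real part via the quadratic formula. You instead factor $\Mb=\Dc\Ac$ with $\Dc\succ 0$ and $\Ac$ having skew-symmetric coupling, so that a single numerical-range computation in the $\Dc^{-1}$-weighted inner product gives $Re[\lambda]\geq 0$ for free, and then rule out the entire imaginary axis (including $\lambda=0$, which the paper must treat separately) by an eigenvector-structure argument: $Re[\lambda]=0$ forces $u=0$, hence $\Sb\Rb v=0$, hence $v=0$ by $\Rb^\top\Lb\Rb=(R^\top\Lc R)\otimes I_d\succ 0$. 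Both proofs ultimately rest on the same two ingredients (Lemma \ref{H+L} and positive definiteness of the reduced Laplacian), but yours avoids the determinant manipulations and the division by $\lambda$ altogether, and it is cleaner on one technical point: by working with conjugate transposes $\xi^*$ throughout, you correctly handle complex eigenvectors, whereas the paper writes $v^\top$ for a (possibly complex) null vector, which strictly speaking only makes its quadratic-form quantities real if one reads $v^\top$ as $v^*$. What the paper's route buys in exchange is an explicit scalar quadratic satisfied by each eigenvalue, which localizes the spectrum more concretely and could in principle be used to extract quantitative rate information, whereas your argument certifies only the sign of the real parts.
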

\begin{proof}
For any $i \in [(2n-1)d]$, $\lambda_i$ must be a solution to $det(\Mb - \lambda I_{(2n-1)d}) = 0.$
First, let us rule out the possibility of having $\lambda_i=0$.
\begin{equation}
    \begin{aligned}
    det(\Mb) &= det\Bigg(\begin{bmatrix}
   \Db   (\Hb+ \Lb) & \Db \mathbf{S}\mathbf{R}\\
   -\mathbf{R}^\top   \mathbf{S} &0
    \end{bmatrix}\Bigg)\\
    &= det\Bigg(\begin{bmatrix}
   \Db   (\Hb+ \Lb) & 0\\
   -\mathbf{R}^\top   \mathbf{S} &I_{(n-1)d}
    \end{bmatrix}\begin{bmatrix}
   I_{nd} & (\Db   (\Hb+ \Lb))^{-1}\Db \mathbf{S}\mathbf{R}\\
   0 &\mathbf{R}^\top   \mathbf{S} (\Db   (\Hb+ \Lb))^{-1}\Db \mathbf{S}\mathbf{R}
    \end{bmatrix}\Bigg)\\
        &= det(\Db   (\Hb+ \Lb))det(\mathbf{R}^\top   \mathbf{S} (\Db   (\Hb+ \Lb))^{-1}\Db \mathbf{S}\mathbf{R})\\
    &= det(\Db )det(  \Hb+ \Lb)det(\mathbf{R}^\top   \mathbf{S}   (\Hb+ \Lb)^{-1} \mathbf{S}\mathbf{R}).
    \end{aligned}
\end{equation}
Since $\Rb^\top \Sb \Sb \Rb = (R^\top\Lc R) \otimes I_d \succ 0$, the null space of $\Sb \Rb$ is 0. Then, $\mathbf{R}^\top   \mathbf{S}   (\Hb+ \Lb)^{-1} \mathbf{S}\mathbf{R}$ is positive definite since $\Hb+ \Lb$ is positive definite (Lemma \ref{H+L}). As a result, this confirms that $det(\Mb) > 0$, implying $\lambda_i \neq 0$ for all $i \in [(2n-1)d]$.

The next step is to look at the characteristic polynomial of $\Mb$, where we have
\begin{equation}
    \begin{aligned}
    0&=det(\Mb - \lambda I_{(2n-1)d}) \\
    &=det\Bigg(\begin{bmatrix}
   \Db   (\Hb+ \Lb) - \lambda I_{nd} & \Db \mathbf{S}\mathbf{R}\\
   -\mathbf{R}^\top   \mathbf{S} &-\lambda I_{(n-1)d}
    \end{bmatrix}\Bigg) \\
    &=det(-\lambda I_{(n-1)d})det( \Db   (\Hb+ \Lb) - \lambda I_{nd} - \Db \Sb \Rb ( \lambda I_{(n-1)d} )^{-1} \Rb^\top \Sb )\\
    &=det(\Db) det(  (\Hb+ \Lb) - \lambda \Db^{-1} - \frac{1}{\lambda} \Sb \Rb  \Rb^\top \Sb )\\
    &=det((\Hb+ \Lb) - \lambda \Db^{-1} - \frac{1}{\lambda} \Sb (I_{nd} - \rb \rb^\top) \Sb ) \\
    &=det((\Hb+ \Lb) - \lambda \Db^{-1} - \frac{1}{\lambda} \Lb ).
    \end{aligned}
\end{equation}
Observe that $(\Hb+ \Lb) - \lambda \Db^{-1} - \frac{1}{\lambda} \Lb $ is a symmetric matrix, and $det((\Hb+ \Lb) - \lambda \Db^{-1} - \frac{1}{\lambda} \Lb) = 0$ implies that there exists a non-zero vector $v \in \R^{nd}$ for any solution of $\lambda$ such that 
$$v^\top ((\Hb+ \Lb) - \lambda \Db^{-1} - \frac{1}{\lambda} \Lb)v = 0.$$
Since $\Lb$ is positive semi-definite, $\Db^{-1}$ and $(\Hb +\Lb)$ are positive definite, $v^\top \Lb v \geq 0, v^\top \Db^{-1} v > 0, v^\top (\Lb + \Hb) v > 0$. When $v^\top \Lb v = 0$, 
$$\lambda = \frac{ v^\top (\Lb + \Hb) v}{v^\top \Db^{-1} v} >0,$$
when $v^\top \Lb v > 0$, 
$$\lambda = \frac{v^\top (\Lb + \Hb) v \pm \sqrt{(v^\top (\Lb + \Hb) v)^2 - 4 v^\top \Lb v v^\top \Db^{-1} v}}{2v^\top \Db^{-1} v},$$
certifying that $Re[\lambda]>0$ in both cases.
\end{proof}

\section{Numerical Simulation}
In this section, we use a real data-set to show the linear convergence of the training loss in a regression problem. We  will investigate the performance of distributed mirror descent with and without integral feedback. We utilize Euler's discretization scheme on algorithm \eqref{setup_with_y}. The resulting discrete-time algorithm for distributed mirror descent with integral feedback is provided below.
\begin{equation}\label{discrete_final}
    \begin{aligned}
        {z_i}^{(k+1)}&=  {z_i}^{(k)} - \bigg( \nabla f_i({x_i}^{(k)}) + {y_i}^{(k)}\vphantom{\sum_{j \in \Nc_i}}  +\sum_{j \in \Nc_i} ({x_i}^{(k)} - {x_j}^{(k)})  \bigg)\Delta t,
     \\
   {y_i}^{(k+1)} &= {y_i}^{(k)} + \sum_{j \in \Nc_i} ({x_i}^{(k)} - {x_j}^{(k)}) \Delta t \vphantom{\bigg)},\\
   {x_i}^{(k+1)} &= \nabla\phi^\star({z_i}^{(k+1)}) \vphantom{\sum_{j \in \Nc_i}\bigg)}.
\end{aligned}
\end{equation}
Details of this discretization is omitted in this manuscript and has been provided in \cite{sun2020distributed}.

\noindent
\textbf{Distance Generating Function for MD:}
We use the \textit{Negative Entropy} as our distance generation function $\phi$, namely, 
$$        \phi(x) = \sum_{j=1}^d [x]_j \log([x]_j) \Longrightarrow
        [z]_i = [\nabla \phi(x)]_i = 1+ \log([x]_i).$$
Based on Section \ref{subsec:MD}, the corresponding convex conjugate function $\phi^\star$ can be written below,
$$ \phi^\star(z) = \sum_{j=1}^d e^{[z]_j -1} \Longrightarrow [x]_i = [\nabla \phi^\star(z)]_i = e^{[z]_i -1}.$$
The reason for our choice of DGF is that Kullback–Leibler divergence is one the most commonly used Bregman divergences other than Euclidean distance, which simply reduces the method to distributed gradient descent with integral feedback as in \cite{kia2015distributed}.

\noindent
\textbf{Network Structure:}
We consider a 10-agent cycle network; each agent is connected to its previous and next agent in the loop.

\noindent
\textbf{Data Set and Model:} We use the \textit{Wine Quality Data Set} in UCI ML repository \cite{cortez2009modeling}. This is a regression data-set with 11 continuous input variables. Each agent is assigned 400 data instances with no overlap. For agent $i\in [n]$, we denote the input data and output data as $A_i \in \R^{400\times 11}$ and $b_i \in \R^{400}$, respectively. The model is a linear regression where the loss function is defined as the quadratic error loss, $f_i(x) = \frac{1}{2}\norm{A_i x - b_i}^2$. We can verify that this setup satisfies Assumptions \ref{local_convex} and \ref{global_assump}. The global objective function $F(x) = \sum_{i \in [n]} \frac{1}{2}\norm{A_i x - b_i}^2 = \frac{1}{2}\norm{\Ab x - \bb}^2 $, where $\Ab, \bb$ are the stacked version of $A_i, b_i$, respectively. Moreover, we can calculate the closed form solution of the global problem, $x^\star = \Ab^\dagger \bb$, where $\Ab^\dagger$ denotes the pseudo-inverse of $\Ab$.

Note that the selected model is not necessarily optimal for test prediction accuracy, and the aim of this numerical simulation is to show the ability of our proposed algorithm to converge exponentially fast to the optimal loss on a given data set. Finding a better model to fit this data set is not the main focus of this work.

\noindent
\textbf{Performance:}
We provide the trajectory of our proposed algorithm and also a comparison between our work and prior works \cite{li2016distributed,shahrampour2017distributed} on distributed mirror descent without integral feedback. In particular, we once run the algorithm without intergal feedback using diminishing step-size $\frac{1}{\sqrt{k}}$ to ensure consensus, and once using a constant step-size in optimization, which is unable to reach optimal solution.

The plot of $F(x_1^{(k)}) - F(x^\star)$ is shown in Fig. \ref{comparison}, representing the convergence speed of the three algorithms. We can see that our proposed algorithm converges faster than diminishing step-size setup, while the constant step-size setup without integral feedback fails to converge. We plot $\log(F(x_1^{(k)}) - F(x^\star))$ in Fig. \ref{comparison} to further display the exponential convergence (i.e., linear in log-scale) speed of our proposed method.

\begin{figure}[h]
\centering
\makebox[\textwidth]{\includegraphics[width=1\textwidth]{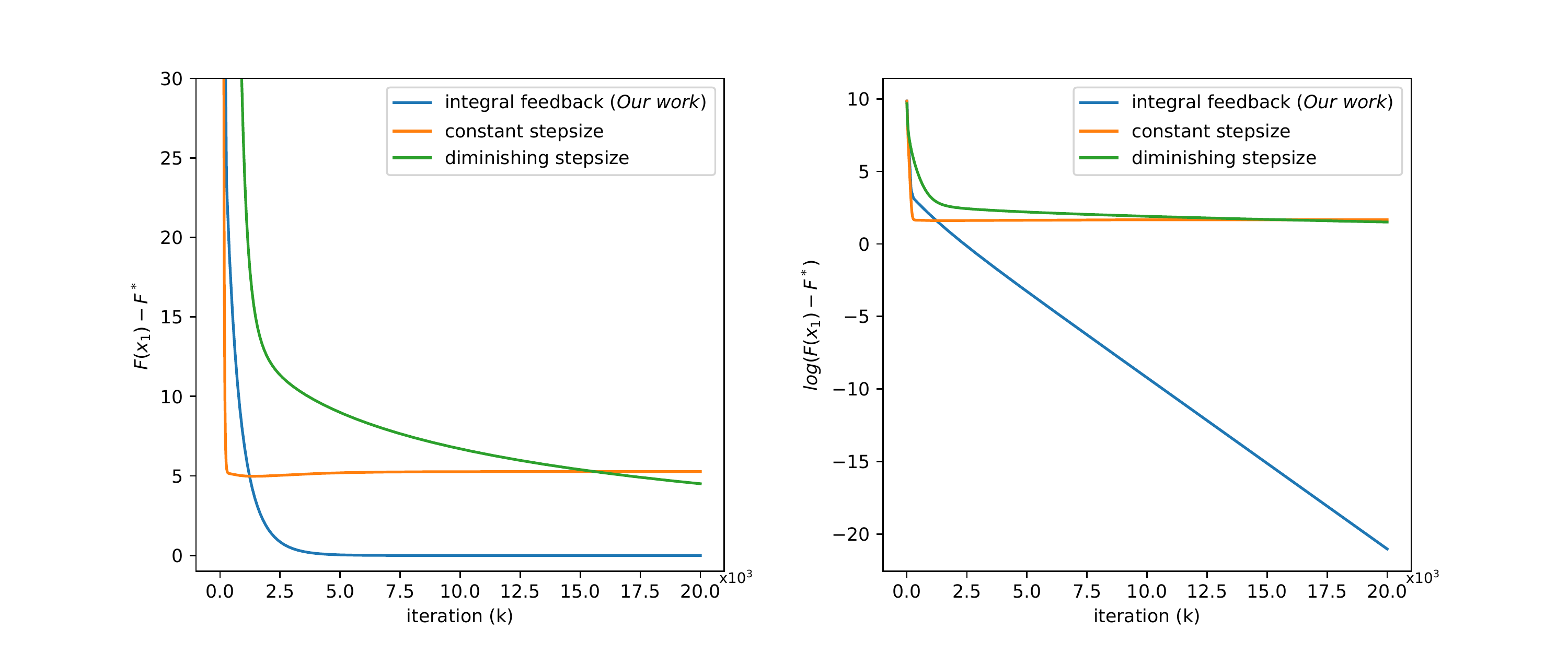}
}
\captionsetup{justification=centering,margin=1cm}

\caption{The trajectory of difference between $F(x)$ and optimal $F(x^\star)$ evaluated at agent $1$. \\Left: linear-scale, Right: log-scale}
\label{comparison}
\end{figure}

\section{Conclusion}
In this paper, we studied the distributed optimization problem, where a network of agents work together to find the optimal solution for a global objective function. We studied a distributed mirror descent algorithm that benefits from the idea of integral feedback. We established that the convergence rate of our algorithm is exponential (locally), which shows the advantage of adopting integral feedback for strongly convex problems. Our claim is supported by empirical results on a real data-set.

Though our work provides exponential convergence rate for strongly convex distributed optimization, more analysis is needed to generalize this work to other network settings, such as dynamic networks and networks with delays. Another interesting direction includes the theoretical analysis of the discretized version of this algorithm, shown in \eqref{discrete_final}. These are open questions left for future works.
% Acknowledgments---Will not appear in anonymized version

% \bibliography{yourbibfile}

\bibliography{References.bib}
\end{document}